\documentclass[a4paper, 11pt]{amsart}
\usepackage[utf8]{inputenc}

\usepackage{latexsym,amssymb,amsmath,graphics}
\usepackage[dvips]{graphicx}


%

\def\ffi{\varphi}
\def\eps{\varepsilon}
\def\dst{\displaystyle}

%
%

\def\R{{\mathbb{R}}}

\def\Z{{\mathbb{Z}}}

\def\d{\,{\mathrm{d}}}
%
%

\newcommand{\norm}[1]{{\left\|{#1}\right\|}}
\newcommand{\ent}[1]{{\left[{#1}\right]}}

\newcommand{\scal}[1]{{\left\langle{#1}\right\rangle}}

%
%

\newtheorem{lemma}{Lemma}[section]

\newtheorem{theorem}[lemma]{Theorem}

\theoremstyle{definition}
\newtheorem{definition}[lemma]{Definition}

\theoremstyle{remark}

%
%
%
%

\begin{document}

\title[Sampling on spaces of homogeneous type]{A sampling theorem for functions in Besov spaces on spaces of homogeneous type}

\author[Ph. Jaming \& F. Negreira]{Philippe Jaming \& Felipe Negreira}

\address{Univ. Bordeaux, IMB, UMR 5251, F-33400 Talence, France.
CNRS, IMB, UMR 5251, F-33400 Talence, France.}
\email{Philippe.Jaming@math.u-bordeaux.fr}
\email{Felipe.Negreira@math.u-bordeaux.fr}

\begin{abstract}
In this work we establish a sampling theorem for functions in Besov spaces on spaces of homogeneous type as defined in \cite{HY} in the spirit of their recent counterpart for $\R^d$ established by Jaming-Malinnikova in \cite{JM}. The main tool is the wavelet decomposition presented by Deng-Han in \cite{DH}.
\end{abstract}

\keywords{Besov spaces, samlping theory, spaces of homogeneous type.}
\subjclass[2010]{Primary 94A20; Secondary 30H25, 43A85.}

\maketitle

\section{Introduction.}

The problem of representing and analyzing functions (signals, images or other data) on settings other than the usual euclidean case has become very active field of research over the past decades. In practice this may be explained by the fact that the meaningful data obtained by an acquisition system is often considered to belong to spaces (manifolds, fractals,...) that differ from
a linear subspaces of $\R^d$. This problem is also of theoretic interest since understanding functions and how to represent them
are intrinsic questions on the ambient space on which they are defined.

Some of the cases treated in recent years comprise the sphere \cite{NPW}, locally compact groups \cite{FG}, or even an arbitrary compact 
manifold \cite{P2}. Here our framework will be the so-called spaces of homogeneous type introduced by Coifman and Weiss in \cite{CW}. 
These spaces include all previous mentioned spaces and many more like
the $d$-sets and $d$-spaces in the sense of Triebel \cite{T,T2} which include various types of fractals,
{\it see e.g.} \cite{DH} for a more complete list of concrete examples.

One key feature in analysis is that, in order to be able to study function spaces $\mathcal{F}(X)$
on some ambient space $X$ on which the functions in $\mathcal{F}(X)$ are defined,
one needs a proper representing system $\{\ffi_j\}_j$
(orthonormal bases, frames,...).
Once such a system is available, one associates to any $f\in \mathcal{F}(X)$ 
its coefficients in the representing system $\{\scal{f,\psi_j}\}_j$ (where $\psi_j$ is a ``dual system'')
and then characterizes the fact that 
$f\in \mathcal{F}(X)$ by the behavior of those coefficients. Once this is done, one hopes to be able to
reconstruct $f$ from the coefficients via a summation $f\equiv\sum \scal{f,\psi_j}\ffi_j$.
A key feature in this paper is that such decompositions are available on functions spaces over
spaces of homogeneous type and, moreover, the representation system shares many aspects of the wavelets on $\R^d$
({\it see e.g.} \cite{DH,HX,HY}).

However, one now faces a practical issue. A typical measurement system would not provide the coefficients
$\scal{f,\psi_j}$ but rather values of $f$ at some points of the ambient space $X$.
The aim of Sampling Theory is precisely to reconstruct a function from its samples. The most famous result in that direction is the 
Shannon-Whittaker-Kotelnikov Theorem which states that a band-limited function on $\R$ can be reconstructed from its regular samples.
This theorem has been extended in numerous ways ({\it see e.g.} \cite{Un,Za} and references therein).
To some extend the work of Pesenson \cite{P1,P2} or F\"uhr-Gr\"ochenig \cite{FG} is an adaptation of this classical result
to other ambient spaces. The general idea of a sampling theorem can be stated as follows: if a function has moderate oscillation, then it is well approximated in the neighborhood of a point by its value at that point. If those neighborhoods cover sufficiently well the space, the sample should allow to describe the function globally. Viewed this way, a sampling theorem is roughly characterized by two parameters: the sampling rate and the oscillation of the function.

While the sampling rate is somehow conditioned by the geometry of the ambient space, oscillation can be measured by different means. In that sense, the first author and E. Malinnikova \cite{JM} proved a sampling theorem on $\R^d$ in which, instead of using Paley-Wiener spaces, oscillation is measured in the scale of Besov spaces $B^s_{p,q}$. In a preliminary report to this work \cite{JN} we have used this techniques with the representation system presented in \cite{NPW} to give a proof of this result in the $d$-dimensional sphere. This work could be extended to any compact Riemannian manifold by replacing \cite{NPW} with its extension by Geller-Pesenson \cite{GP}. However, our proof here allows for a further generalization to spaces of homogeneous type.

\medskip

\noindent{\bf Main Theorem.}
{\sl Let $X$ be a space of homogeneous type and $1\leqslant p\leqslant\infty$. Then, given $0<\eps<1$ and $K>0$, there exists sets of points $\{a_n\}_n$ on $X$ and constants $c_1,c_2$ such that 
\begin{equation*}
c_1(1-\eps)\norm{f}_{L^p}\leqslant\left(\sum_n|f(a_n)|^p\right)^{1/p}\leqslant c_2(1+\eps)\norm{f}_{L^p}.
\end{equation*}
holds true for all $f\in B^{d/p}_{p,1}(X)$ with $\norm{f}_{B^{d/p}_{p,1}}\leqslant K\norm{f}_{L^p}$. 
}

\medskip

The actual result is more precise, {\it see} below. The proof consists in using the wavelets constructed by Deng and Han in \cite{DH}
and to define the Besov spaces in terms of the wavelet coefficients in the spirit of the characterization of
Besov spaces on $\R^d$ proved by Y. Meyer \cite{M}. This characterization corresponds to usual Besov spaces in most cases.
The proof than consists in carefully adapting the proof from \cite{JM} to the geometry of homogeneous spaces.

\smallskip

Let us now breifly describe the content of this article. In section 2 we make a quick review of spaces of homogeneous type as defined in \cite{HY}. In section 3 we show some useful proprieties of wavelet family constructed in \cite{DH} as well as the characterization of Besov spaces in this context. Finally, in section 4 we prove our main result.

\section{General framework.}

We begin by describing the general framework in which we are going to work. A quasi-metric $\rho$ in a set $X$ is a function $\rho:X\times X\rightarrow[0,\infty)$ satisfying
\begin{enumerate}\renewcommand{\labelenumi}{(\roman{enumi})}
\item $\rho(x,y)=0$	if and only if $x=y$,

\item $\rho(x,y)=\rho(y,x)$ for all $x,y\in X$,

\item there exists a constant $A>0$ such that for all $x,y,z\in X$
\begin{equation}\label{quasi-metric}
\rho(x,y)\leqslant A(\rho(x,z)+\rho(z,y)).
\end{equation}
\end{enumerate}

Following \cite{HY}, a \textit{space of homogeneous type} $(X,\rho,\mu)_{d,\theta}$ is a set $X$ endowed with a quasi-metric $\rho$ and a non-negative Borelian measure $\mu$, where $d>0$ and $0<\theta\leqslant1$ are such that there exist a constant $C>0$ for which given any $0<r<\text{diam}(X)$ and any $x,x',y\in X$
\begin{gather}
\label{dimension}C^{-1}r^d\leqslant\mu(B(x,r))\leqslant Cr^d,
\\
\label{lipschitz}|\rho(x,y)-\rho(x',y)|\leqslant C\rho(x,x')^\theta(\rho(x,y)+\rho(x',y))^{1-\theta}.
\end{gather}
Macias and Segovia \cite{MS} have proved that these spaces are just the spaces of homogeneous type in the sense of Coifman and Weiss \cite{CW}, whose definitions only require $\rho$ to be a quasi-metric, without \eqref{lipschitz}, and $\mu$ to satisfy the doubling condition, which is weaker than \eqref{dimension}.

The parameter $d$ in \eqref{dimension} is sometimes called the \textit{homogeneous dimension} of $X$. And, indeed, it is clear from the examples given in the introduction that $d$ refers to some kind of dimension of the space. In addition, $\theta$ in \eqref{lipschitz} measures how close the space is of being metric, i.e. when $\rho$ is a metric then we can take $\theta=1$. In this case we get what is known as an Ahlfors $d$-regular space. Nonetheless it is important to remark that in some cases the parameter $\theta$ can not be omitted, as for instance the case $\R^d$ endowed with a non-isotropic metric and the Lebesgue measure.

Let us end this section by mentioning a useful fact which follows from \eqref{dimension}: for $\alpha>-d$ and $r>0$
\begin{equation}\label{useful}
\int_{B(x,r)}\rho(z,x)^{\alpha}\,\d\mu(z)\approx r^{\alpha+ d}
\end{equation}
where the constants depend only on the ambient space $X$.

\section{Wavelet expansion.}

Wavelets in arbitrary $X$ are defined through what is called an \textit{approximation to the identity} 
(see \cite[$\S1$]{HY}). However to define any wavelet system, as in $\R^d$, at some point 
we will need to partition our ambient space in a uniform manner for each scale $j\in\Z$. This can be done in different ways according to the space (e.g. for the sphere there is construction with spherical simplices \cite{MNW}, an arbitrary compact manifold it is decomposable into a $r$-lattice \cite{P2}, ...), but here we will take a more general approach by using the following result of Hytönen and Kairema.

\begin{theorem}[{\cite[Theorem 2.1]{HK}}]
For each $j\in\Z$ there exist a countable collection of open subsets $\{Q^j_k\}_{k\in I_j}$ in such that
\begin{enumerate}\renewcommand{\labelenumi}{(\roman{enumi})}
\item for every $j\in\Z$,
\begin{equation*}
X=\bigcup_{k\in I_j}Q^j_k,
\end{equation*}
		
\item there are constants $r_0,r_1>0$ for which given any pair $(j,k)$ there exist at least one $y^j_k\in Q^j_k$ with
\begin{equation}\label{eps}
B(y^j_k,r_02^{-j})\subset Q^j_k\subset B(y^j_k,r_12^{-j}),
\end{equation}
		
\item if $i\geqslant j$ then
\begin{equation}\label{disjoint}
\text{either }\:Q^i_{k'}\subset Q^j_k\:\text{ or }\:Q^i_{k'}\cap Q^j_k=\emptyset,
\end{equation}
		
\item for each $(j,k)$ and $i<j$ there exist a unique $k'$ for which
\begin{equation*}
Q^j_k\subset Q^i_{k'}.
\end{equation*}
\end{enumerate}
\end{theorem}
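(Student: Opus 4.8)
The plan is to adapt M.~Christ's dyadic cube construction to the present quasi-metric setting. First I would normalise the dilation so that scale $j$ corresponds to radius comparable to $2^{-j}$, and for each $j\in\Z$ construct a set of centres $\{y^j_k\}_{k\in I_j}$ that is \emph{maximal} among the subsets of $X$ whose points are pairwise at $\rho$-distance at least a fixed multiple of $2^{-j}$. Maximality forces such a net to be $2^{-j}$-\emph{dense}: every $x\in X$ lies within $\rho$-distance $\sim 2^{-j}$ of some centre. The Ahlfors regularity \eqref{dimension} guarantees these nets are countable and locally finite, since a volume-packing estimate bounds the number of separated points inside any ball. One may also arrange the nets to be \emph{nested}, $\{y^j_k\}_k\subset\{y^{j+1}_k\}_k$, by building them inductively from coarse to fine scales.

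Next I would impose a tree structure on the centres. To each finer centre $y^{j+1}_{k'}$ I would assign a \emph{parent} among the coarse centres lying within distance $\lesssim 2^{-j}$, breaking ties by a fixed well-ordering of the index sets; density of the coarse net guarantees a parent exists, while the quasi-triangle inequality \eqref{quasi-metric} controls the parent's distance. Iterating the parent map yields, for every pair $i<j$, a unique ancestor at scale $i$ of each centre at scale $j$, which is exactly property (iv) once the cubes are defined. The cubes themselves I would define by a Voronoi-type assignment threaded through this tree: at the finest relevant scale each point is attached to its nearest centre, and $Q^j_k$ is declared to be (the open modification of) the union of all finer cubes whose ancestor chain passes through $y^j_k$.

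With the cubes in hand the four properties follow in turn. Covering (i) is immediate, since every point is assigned at every scale. The two-sided containment \eqref{eps} in (ii) comes from separation and density together with \eqref{quasi-metric}: separation forces a fixed fraction of the ball $B(y^j_k,r_0 2^{-j})$ into the cube, giving the inner bound, while density bounds the distance from any point of $Q^j_k$ to its centre, giving the outer ball $B(y^j_k,r_1 2^{-j})$. The nesting dichotomy \eqref{disjoint} in (iii) is built in: a child cube sits inside the cube of its parent, and two cubes at comparable scales either coincide or have disjoint assignment sets.

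The main obstacle, as in Christ's original argument, is to reconcile the \emph{round} Voronoi geometry that produces the ball bounds \eqref{eps} with the \emph{strict nesting} \eqref{disjoint}. A naive nearest-centre partition at each scale does not nest: a point may be closest to one coarse centre yet have its nearest fine centre descend from a different coarse centre. The tree-guided assignment above repairs this, but making it consistent requires the separation and density constants at successive scales to be compatible — delicate precisely because the quasi-triangle inequality \eqref{quasi-metric} carries a constant $A>1$ (and $\theta<1$ in \eqref{lipschitz}). One must therefore take the dilation small enough, equivalently refine by a fixed integer factor of scales, so that the error terms governed by $A$ cannot break nesting. Finally, the scale-invariance of \eqref{dimension} and \eqref{quasi-metric} is what makes $r_0,r_1$ uniform in $j$, yielding the stated constants independent of the scale.
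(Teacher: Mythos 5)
This theorem is not proved in the paper at all: it is quoted verbatim from Hyt\"onen--Kairema \cite{HK}, so there is no internal proof to compare yours against. Your sketch follows essentially the same route as the cited source, namely M.~Christ's dyadic construction adapted to quasi-metric doubling spaces: nested maximal separated nets at each scale, a parent map turning the centres into a tree, and cubes obtained by propagating a Voronoi-type assignment along that tree. You also correctly isolate the genuine difficulty, the tension between the round geometry needed for \eqref{eps} and the strict nesting \eqref{disjoint}.

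That said, as a proof your text stops exactly where the work begins. The inner containment $B(y^j_k,r_02^{-j})\subset Q^j_k$ is the crux: one must show that for $x$ with $\rho(x,y^j_k)\leqslant\delta 2^{-j}$ the ancestor chain of the fine-scale centre nearest to $x$ cannot drift to a different scale-$j$ ancestor, which requires summing the geometric series of displacements $\sum_{m\geqslant0}C2^{-(j+m)}$ against the separation constant, with the quasi-triangle constant $A$ of \eqref{quasi-metric} compounding at every step; you assert that the constants can be arranged but do not verify it, and this verification is the actual content of the theorem in \cite{HK}. A second, more minor point: openness of the cubes together with the covering in (i) and the disjointness implicit in \eqref{disjoint} at equal scales cannot all hold literally when $X$ is connected; \cite{HK} resolves this by constructing a triple of open, half-open and closed cubes $\tilde Q\subset Q\subset\bar Q$ with the half-open ones partitioning $X$, and your ``open modification'' would have to make the same distinction. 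Neither issue shows your plan is wrong --- it is the standard and correct approach --- but both must be carried out before the sketch becomes a proof.
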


We will call the sets $Q^j_k$ \textit{dyadic cubes} and we will refer to the points $y^j_k$ in \eqref{eps} as their \textit{centers}.
Note that \eqref{eps} implies that $\mu(Q_k^j)\approx2^{-jd}$ with constants that depend on $r_0,r_1$ and $X$.
These cubes are used to construct a wavelet type system that is localized around their respective centers. We will need the following result that can be found {\it e.g.} in \cite[$\S3.5$]{DH}.

\begin{theorem}
Let $\{Q^j_k\}_{k\in I_j}$ be the family of dyadic cubes of the previous theorem and $y^j_k$ their respective centers. There exists two families of functions $\{\ffi_{j,k}\}_{j,k},\{\psi_{j,k}\}_{j,k}$ and a constant $C_\ffi>0$ such that

--- each $\{\ffi_{j,k}\}_{j,k}$ satisfies a size condition
\begin{equation}\label{wavelet-size}
\ffi_{j,k}(x)=0\:\text{ if }\:\rho(x,y^j_k)\geqslant C_\ffi 2^{-j}\quad\text{ and }\quad\norm{\ffi_{j,k}}_\infty\leqslant C_\ffi 2^{jd/2},
\end{equation}

--- each $\{\ffi_{j,k}\}_{j,k}$ satisfies a smoothness condition
\begin{equation}\label{wavelet-smooth}
|\ffi_{j,k}(x)-\ffi_{j,k}(y)|\leqslant C_\ffi 2^{j(d/2+\theta)}\rho(x,y)^\theta\quad\text{ for all }\:x,y\in X.
\end{equation}

--- the families are dual in the sense that 
\begin{equation}\label{decomposition}
f=\sum_{j\in\Z}\sum_{k\in I_j}\langle f,\psi_{j,k}\rangle\ffi_{j,k}
\end{equation}
holds true for any $f\in L^2(X)$.
\end{theorem}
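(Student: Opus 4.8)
The plan is to obtain the pair of families $\{\ffi_{j,k}\}$, $\{\psi_{j,k}\}$ by discretizing a continuous Calderón-type reproducing formula, following the construction of Deng and Han. The first ingredient I would invoke is an \emph{approximation to the identity} on $X$: a family of integral operators $\{S_j\}_{j\in\Z}$ whose kernels $S_j(x,y)$ are supported in $\{\rho(x,y)\leqslant c2^{-j}\}$, are bounded by $C2^{jd}$, are Hölder continuous of order $\theta$ with constant $C2^{j(d+\theta)}$, and satisfy the normalization $\int_X S_j(x,y)\d\mu(y)=1$. A compactly supported family of this type can be built by mollifying the dyadic cubes or by the standard Coifman construction, and the measure estimate \eqref{dimension} together with \eqref{lipschitz} is exactly what guarantees the stated kernel bounds.

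Next I would pass to the difference operators $D_j=S_j-S_{j-1}$, whose kernels inherit the localization and Hölder smoothness of the $S_j$ but now have vanishing integral in $y$; this cancellation produces the oscillation expected of a wavelet. Using this cancellation together with \eqref{lipschitz}, a Cotlar--Stein argument yields the almost-orthogonality estimate $\norm{D_iD_j}_{L^2\to L^2}\lesssim 2^{-|i-j|\theta'}$ for some $\theta'\in(0,\theta]$. Combined with the telescoping identity $\sum_j D_j=I$ on $L^2(X)$, this is Coifman's decomposition of the identity: writing $I=\sum_{|i-j|\leqslant N}D_iD_j+R_N$ with $R_N=\sum_{|i-j|>N}D_iD_j$, the almost-orthogonality gives $\norm{R_N}\leqslant\sum_{|i-j|>N}2^{-|i-j|\theta'}<1$ for $N$ large, so $I-R_N$ is invertible and one obtains a reproducing formula $f=\sum_j\tilde{D}_jD_jf$ converging in $L^2(X)$, where the operators $\tilde{D}_j$ inherit the localization and smoothness of the $D_j$.

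The third step is the discretization. I would approximate each integral operator $D_jf(x)=\int_X D_j(x,y)f(y)\d\mu(y)$ by the Riemann-type sum $\sum_{k\in I_j}\mu(Q^j_k)D_j(x,y^j_k)f(y^j_k)$, sampling at the centers $y^j_k$ and weighting by $\mu(Q^j_k)\approx 2^{-jd}$. Setting $\ffi_{j,k}(x)=\mu(Q^j_k)^{1/2}D_j(x,y^j_k)$, the support condition and the bound $\norm{\ffi_{j,k}}_\infty\leqslant C_\ffi 2^{jd/2}$ of \eqref{wavelet-size} follow at once from the kernel estimates and $\mu(Q^j_k)^{1/2}\approx 2^{-jd/2}$, while the Hölder bound of \eqref{wavelet-smooth} transfers directly from the smoothness of $D_j(\cdot,y^j_k)$ in its first variable, with the two powers $2^{jd/2}$ and $2^{j(d/2+\theta)}$ reproduced exactly.

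The main obstacle, and the step demanding the most care, is to show that this sampled formula is invertible, so that genuine dual functionals $\psi_{j,k}$ exist and \eqref{decomposition} holds. I would write the discrete operator as $T=I-R$, where $R$ is the accumulated discretization error, and estimate $\norm{R}_{L^2\to L^2}<1$ by playing the Hölder smoothness of the kernels against the fineness of the sampling: refining the partition by a fixed number of generations forces each replacement error to carry a gain $2^{-N\theta}$, and summing these against the almost-orthogonality bounds keeps $\norm{R}\lesssim 2^{-N\theta}<1$. Once this is secured, the Neumann series $T^{-1}=\sum_{n\geqslant0}R^{\,n}$ converges in $L^2(X)$, and defining $\psi_{j,k}$ as the image of $\mu(Q^j_k)^{1/2}D_j(\cdot,y^j_k)$ under $(T^{-1})^{*}$ produces the dual family. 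The delicate verifications are the uniform smallness of $\norm{R}$ and the fact that the $\psi_{j,k}$ retain enough localization to make each pairing $\scal{f,\psi_{j,k}}$ well defined; both are handled by the vector-valued almost-orthogonality machinery underlying \cite{DH}.
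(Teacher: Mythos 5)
The paper offers no proof of this theorem, quoting it directly from \cite[\S 3.5]{DH}, and your sketch is a faithful reconstruction of the construction given there: an approximation to the identity, Coifman's decomposition of the identity via Cotlar--Stein almost orthogonality, discretization of the resulting Calder\'on-type reproducing formula over the dyadic cubes, and inversion of the accumulated discretization error by a Neumann series. In particular you correctly place the compactly supported kernels $D_j(\cdot,y^j_k)$ on the synthesis side and push the inverse operator onto the dual family, which is exactly what makes \eqref{wavelet-size} and \eqref{wavelet-smooth} hold for the $\ffi_{j,k}$ while only weak properties are required of the $\psi_{j,k}$.
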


Unlike wavelet systems in $\R^d$, here the family $\{\ffi_{j,k}\}_{j,k}$ does not necessarily constitute an orthogonal 
system.\footnote{An orthonormal wavelet system on spaces of homogeneous type has been constructed in \cite{AH}.
The construction is much more involved than the wavelet frames considered here and does not bring any significant improvement
in our results.}
 In particular their supports are not necessarily disjoint. However, the size condition \eqref{wavelet-size} together with \eqref{eps} and \eqref{disjoint} imply that there exist a constant $N>0$ for which given any $j\in\Z$ the supports of $\{\ffi_{j,k}\}_{k\in I_j}$ have finite multiplicity $N$.

We refrain from listing properties of the family $\{\psi_{j,k}\}_{j,k}$ as they will not
be used here except for the fact that $\langle f,\psi_{j,k}\rangle$ makes sense for $f\in L^1_{loc}$. 
This allows us to define the Besov spaces on $X$ in the following way:

\begin{definition}
Let $\{\psi_{j,k}\}_{j,k}$ be the dual family of \eqref{decomposition}. Then, given $0<p,q\leqslant\infty$ and $s\in\R$, the Besov space $B^s_{p,q}(X)$ is defined as the set of all functions $f\in L^1_{loc}$ such that the norm
\begin{equation*}
\norm{f}_{B^s_{p,q}}:=\left(\sum_{j\in\Z}\left[2^{j\left(s+d\ent{\frac{1}{2}-\frac{1}{p}}\right)}\left(\sum_{k\in I_j}|\langle f,\psi_{j,k}\rangle|^p\right)^{1/p}\right]^q\right)^{1/q}
\end{equation*}
is finite. As usual, the $L^p,\ell^q$ norms are replaced by the sup-norms when $p=\infty$ or $q=\infty$.
\end{definition}

This definition is based on that given by Han, Müller and Yang in \cite{HMY} which follows from Littlewood-Paley theory through the approximations of the identity. 
This is the equivalent definition to that given by Meyer \cite{M} in $\R^d$. It also the way they are presented in the sphere by Narcowich-Petrushev-Ward \cite{NPW}.

A difference characterization of Besov spaces in the general setting of a space of homogeneous type has also been given in e.g. \cite{GKS}, \cite{MY}. This definition coincide to that given by Geller-Pesenson \cite{GP}, and again gives the same Besov space in the Euclidean setting. 

Further Müller and Yang in \cite{MY} proved in the more general case 
that both definitions coincide when $-\theta<s<\theta$ and $\max\{\frac{1}{1+\theta},\frac{1}{1+s+\theta}\}\leqslant p,q\leqslant\infty$.

\section{Sampling on spaces of homogeneous type.}

We are now ready to prove our main theorem.

\begin{theorem}
Let $(X,\rho,\mu)_{d,\theta}$ be a space of homogeneous type, $1\leqslant p\leqslant\infty$, and set $\alpha=\max(1,\frac{d}{p\theta})$, $\beta=\max\left(\frac{p}{d},\frac{1}{\theta}\right)$. For every $l\in\Z$, fix a collection of dyadic cubes $\{Q^l_n\}_{n\in I_l}$ with centers $\{a^l_n\}_n$.

Then, given $0<\eps<1$ and $K>0$, there exist a constant $\kappa=\kappa(p,d,\theta)$ such that if $l\geq\beta\ln\left(\frac{\kappa K}{\eps^\alpha}\right)$
\begin{equation}
\left(\int_X\biggl|f(x)-\sum_{n\in I_l}f(a^l_n)\mathbf{1}_{Q^l_n}(x)\biggr|^p\,\mathrm{d}\mu(x)\right)^{1/p}\leqslant
\eps\norm{f}_{L^p}
\end{equation}
holds true for all $f\in B^{d/p}_{p,1}(X)$ with $\norm{f}_{B^{d/p}_{p,1}}\leqslant K\norm{f}_{L^p}$.
In particular, this implies that
\begin{equation*}
(1-\eps)\norm{f}_{L^p}\leqslant\left(\sum_{n\in I_l}\bigl|f(a^l_n)\mu(Q^l_n)\bigr|^p\right)^{1/p}\leqslant
(1+\eps)\norm{f}_{L^p}
\end{equation*}
whenever $l\geq\beta\ln\left(\frac{\kappa K}{\eps^\alpha}\right)$ and $f\in B^{d/p}_{p,1}(X)$ is such that  $\norm{f}_{B^{d/p}_{p,1}}\leqslant K\norm{f}_{L^p}$.
\end{theorem}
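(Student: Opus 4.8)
The plan is to prove the first (approximation) inequality and then to deduce the two-sided sampling estimate from it by a triangle inequality. Write $P_lf:=\sum_{n\in I_l}f(a^l_n)\mathbf{1}_{Q^l_n}$ for the piecewise-constant interpolant. By construction the cubes $\{Q^l_n\}_{n\in I_l}$ of a fixed scale $l$ form a partition of $X$: they cover $X$ and, by \eqref{disjoint} applied with $i=j=l$, are pairwise disjoint. Hence each $x$ lies in a unique $Q^l_{n(x)}$ and the integrand is $f(x)-f(a^l_{n(x)})$. Inserting the wavelet decomposition \eqref{decomposition} and writing $f=\sum_{j\in\Z}g_j$ with $g_j:=\sum_{k\in I_j}\scal{f,\psi_{j,k}}\ffi_{j,k}$, I would estimate
\begin{equation*}
\norm{f-P_lf}_{L^p}\leqslant\sum_{j\in\Z}\norm{g_j(\cdot)-g_j(a^l_{n(\cdot)})}_{L^p}
\end{equation*}
block by block, splitting the sum at the sampling scale $j=l$, which is where the smoothness gain below stops helping.

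For the coarse scales $j\leqslant l$ I would use the smoothness condition \eqref{wavelet-smooth}. On $Q^l_n$ one has $\rho(x,a^l_n)\leqslant r_12^{-l}$ by \eqref{eps}, so each $\ffi_{j,k}$ varies by at most $C_\ffi 2^{j(d/2+\theta)}(r_12^{-l})^\theta$, a factor $2^{(j-l)\theta}\leqslant1$ smaller than the sup-bound of \eqref{wavelet-size}. Feeding this into the sum, using the finite overlap $N$ of the supports, $\mu(\supp\ffi_{j,k})\lesssim 2^{-jd}$ and H\"older's inequality, I expect $\norm{g_j(\cdot)-g_j(a^l_{n(\cdot)})}_{L^p}\lesssim 2^{(j-l)\theta}\norm{g_j}_{L^p}$. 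For the fine scales $j>l$ smoothness is useless and I would bound the two terms separately through \eqref{wavelet-size}: the first is $\norm{g_j}_{L^p}$, while for the sampled term $\sum_n|g_j(a^l_n)|^p\mu(Q^l_n)$ I would note that, since $j>l$, each support $\supp\ffi_{j,k}$ has radius $\lesssim 2^{-j}\leqslant 2^{-l}$ and therefore meets only boundedly many of the $r_02^{-l}$-separated centers $a^l_n$ (again by \eqref{eps} and \eqref{dimension}); with $\mu(Q^l_n)\approx2^{-ld}$ this bounds the sampled norm in terms of $\norm{g_j}_{L^p}$.

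To sum the blocks I would pass from $\norm{g_j}_{L^p}$ to the Besov coefficients via the definition of $\norm{\cdot}_{B^{d/p}_{p,1}}$ together with the overlap estimate $\norm{g_j}_{L^p}\approx 2^{jd(1/2-1/p)}\bigl(\sum_{k\in I_j}|\scal{f,\psi_{j,k}}|^p\bigr)^{1/p}$. The point is that every block satisfies two competing bounds: $\norm{g_j}_{L^p}\lesssim\norm{f}_{L^p}$, from the $L^p$-boundedness of the map $f\mapsto g_j$, and $\norm{g_j}_{L^p}\lesssim 2^{-jd/p}\norm{f}_{B^{d/p}_{p,1}}\leqslant 2^{-jd/p}K\norm{f}_{L^p}$, from the critical index $s=d/p$. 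Inserting the minimum of the two into the coarse and fine geometric sums, and using the hypothesis $\norm{f}_{B^{d/p}_{p,1}}\leqslant K\norm{f}_{L^p}$ throughout, reduces the matter to balancing two geometric decays in $l$: one at rate $2^{-l\theta}$ coming from the smoothness, the other at rate $2^{-ld/p}$ coming from the criticality.

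The main obstacle is exactly this final optimization. The effective decay is $2^{-l\min(\theta,d/p)}$, which is why $\beta=\max(p/d,1/\theta)$ governs the rate; requiring each of the two contributions to be $\leqslant\eps\norm{f}_{L^p}$ and solving for $l$ produces the threshold $l\geqslant\beta\ln\bigl(\kappa K/\eps^\alpha\bigr)$, the exponent $\alpha=\max(1,d/(p\theta))$ recording whether the smoothness-limited or the integrability-limited regime dominates the dependence on $\eps$. Collapsing the various constants (the overlap $N$, $C_\ffi$, $r_0$, $r_1$, the factors of $\ln2$) into a single $\kappa=\kappa(p,d,\theta)$, and dealing with the endpoints $p=1,\infty$ where sup-norms replace the $\ell^p$- and $L^p$-sums, is the delicate bookkeeping. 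Once the approximation estimate holds, the second inequality is immediate: the reverse triangle inequality gives $\bigl|\,\norm{P_lf}_{L^p}-\norm{f}_{L^p}\,\bigr|\leqslant\norm{f-P_lf}_{L^p}\leqslant\eps\norm{f}_{L^p}$, and since the cubes partition $X$ one has $\norm{P_lf}_{L^p}^p=\sum_{n\in I_l}|f(a^l_n)|^p\mu(Q^l_n)$, the displayed discrete quantity.
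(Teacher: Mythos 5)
Your proposal is correct and follows essentially the same route as the paper's proof: wavelet expansion, splitting at the sampling scale $j=l$, the smoothness estimate \eqref{wavelet-smooth} for $j\leqslant l$ and the size estimate \eqref{wavelet-size} for $j>l$, finite-overlap and counting arguments to reduce to single-scale coefficient norms, and the same two competing bounds ($\lesssim\norm{f}_{L^p}$ versus $\lesssim 2^{-jd/p}\norm{f}_{B^{d/p}_{p,1}}$) balanced to produce the threshold on $l$ with the stated $\alpha$ and $\beta$. The only difference is organizational --- you estimate the $L^p$-norm of each Littlewood--Paley block $g_j$ as a whole (so that ``$\norm{g_j}_{L^p}$'' really stands for the normalized coefficient quantity $2^{jd(1/2-1/p)}\bigl(\sum_k|\scal{f,\psi_{j,k}}|^p\bigr)^{1/p}$), whereas the paper works pointwise on each cube $Q^l_n$ and takes the $\ell^p$-norm over $n$ at the end; the resulting estimates are the same.
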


\begin{proof}
Let us first note that
\begin{multline}\label{Lp-ellp}
\left(\int_X\biggl|f(x)-\sum_{n\in I_l}f(a^l_n)\mathbf{1}_{Q^l_n}(x)\biggr|^p\,\mathrm{d}\mu(x)\right)^{1/p}
\\
=\norm{\left(\int_{Q^l_n}|f(x)-f(a^l_n)|^p\,\d\mu(x)\right)^{1/p}}_{\ell^p_{I_l}}
\end{multline}
and so the $L^p$-norm of $\displaystyle f(x)-\sum_{n\in I_l}f(a^l_n)\mathbf{1}_{Q^l_n}(x)$ can be computed as the $\ell^p$-norm of the sequence $\left\{\norm{f-f(a^l_n)}_{L^p(Q^l_n)}\right\}_{n\in I_l}$.
	
Now, take an arbitrary $n\in I_l$ and consider $x\in Q^l_n$. From \eqref{decomposition} we may write 
\begin{equation}\label{expansion}
f(x)-f(a^l_n)=\sum_{j\in\Z}\sum_{k\in I_j}\langle f,\psi_{j,k}\rangle\bigl(\ffi_{j,k}(x)-\ffi_{j,k}(a^l_n)\bigr).
\end{equation}
Recall that by \eqref{wavelet-size} if $\ffi_{j,k}(x)\neq0$ then $\rho(x,y^j_k)\leqslant C_\ffi 2^{-j}$.
We therefore introduce
\begin{equation*}
I_j^x:=\{k\in I_j:\rho(x,y^j_k)\leqslant C_\ffi 2^{-j}\},\quad I_j^{n,x}:=I_j^{a^l_n}\cup I_j^x,\quad I_j^n:=\bigcup_{x\in Q^l_n}I_j^x.
\end{equation*}

First note that if $k\in I_j^x$ and $z\in Q_k^j$, then from \eqref{quasi-metric} we get
\begin{equation*}
\rho(x,z)\leqslant A\bigl(\rho(z,y_k^j)+\rho(x,y_k^j)\bigr)\leqslant A(C_\ffi+r_1)2^{-j}.
\end{equation*}
Therefore $Q_k^j\subset B(x,C2^{-j})$ with $C=A(C_\ffi+r_1)$. But since the cubes $Q_k^j$'s are disjoint of volume $\approx2^{-jd}$ a measure counting argument shows that $2^{-jd}|I_j^x|\lesssim 2^{-jd}$, and thus $I_j^x$ is a finite set with $\#I_j^x\leqslant R$ where $R$ is a constant that depends only on $X$ and the dyadic decomposition.

Further, let us introduce $E_j^n(f):=\left(\sum_{k\in I_j^r}|\langle f,\psi_{j,k}\rangle|^p\right)^{1/p}$. From \eqref{expansion} we deduce that
\begin{multline}\label{wavelet-estimate}
|f(x)-f(a^l_n)|\leqslant
\sum_{j\in\Z}\sup_{k\in I_j}|\ffi_{j,k}(x)-\ffi_{j,k}(a^l_n)|\sum_{k\in I_j^{n,x}}|\langle f,\psi_{j,k}\rangle|
\\
\leqslant R^{1/p'}\sum_{j\in\Z}\sup_{k\in I_j}|\ffi_{j,k}(x)-\ffi_{j,k}(a^l_n)|E_j^n(f)
\end{multline}
with Hölder's inequality.
	
From \eqref{wavelet-size} and \eqref{wavelet-smooth} we know that
\begin{equation*}
|\ffi_{j,k}(x)-\ffi_{j,k}(a^l_n)|\leqslant
\left\{\begin{array}{l}
2C2^{jd/2},
\\
C2^{jd/2}2^{j\theta}\rho(x,a^l_n)^\theta.
\end{array}\right.
\end{equation*}
The second inequality improves over the first one when $2^j\rho(x,a^l_n)\lesssim 1$. Then we can split the sum of \eqref{wavelet-estimate} in two parts to obtain
\begin{equation*}
|f(x)-f(a^l_n)|\leqslant C_p\sum_{j\leqslant l}2^{jd/2}2^{j\theta}\rho(x,a^l_n)^\theta E_j^n(f)+C_p\sum_{j>l}2^{jd/2}E_j^n(f).
\end{equation*}
	
Next, taking the $L^p$-norm over $Q^l_n$ and using the triangular inequality, we get
\begin{multline*}
\left(\int_{Q^l_n}|f(x)-f(a^l_n)|^p\,\d\mu(x)\right)^{1/p}
\\
\leqslant C_p\sum_{j\leqslant l}2^{jd/2}2^{j\theta}\left(\int_{Q^l_n}\rho(x,a^l_n)^{\theta p}\,\d\mu(x)\right)^{1/p}E_j^n(f)
\\
+C_p\sum_{j>l}2^{jd/2}\mu(Q^l_n)^{1/p}E_j^n(f)
\\
\leqslant C_p\sum_{j\leqslant l}2^{jd/2}2^{j\theta}2^{-l(\theta+d/p)}E_j^n(f)
+C_p\sum_{j>l}2^{jd/2}2^{-ld/p}E_j^n(f)
\end{multline*}
where we used \eqref{useful} together the fact that $\mu(Q^l_n)\approx2^{-ld}$ in the last inequality. So when we take the $\ell^p$-norm over $I_l$ we have that
\begin{multline}\label{ellp}
\norm{\left(\int_{Q^l_n}|f(x)-f(a^l_n)|^p\,\d\mu(x)\right)^{1/p}}_{\ell^p_{I_l}}
\\
\leqslant C_p\sum_{j\leqslant l}2^{jd/2}2^{j\theta}2^{-l(\theta+d/p)}\norm{E_j^n(f)}_{\ell^p_{I_l}}
\\
+C_p\sum_{j>l}2^{jd/2}2^{-ld/p}\norm{E_j^n(f)}_{\ell^p_{I_l}}.
\end{multline}

To estimate the $\ell^p$ norm of $E_j^n(f)$ we write for each $k\in I_j$, $\Lambda_{j,l}^k:=\{n\in I_l:k\in I_j^n\}$, so that
\begin{multline}\label{tau-norm}
\norm{E_j^n(f)}_{\ell^p_{I_l}}=\left(\sum_{n\in I_l}(E_j^n(f))^p\right)^{1/p}
=\left(\sum_{n\in I_l}\sum_{k\in I_j^n}|\langle f,\psi_{j,k}\rangle|^p\right)^{1/p}\\
=\left(\sum_{k\in I_j}\sum_{n\in\Lambda_{j,l}^k}|\langle f,\psi_{j,k}\rangle|^p\right)^{1/p}.
\end{multline}
Now, the same arguments to estimate the cardinal of $I_j^{n,x}$ imply that
\begin{equation*}
\#(\Lambda_{j,l}^k)\leqslant
\left\{\begin{array}{ll}
C'2^{(l-j)d}&\text{if }j\leqslant l,
\\
C'&\text{if }j>l.
\end{array}\right.
\end{equation*}
Introducing this into \eqref{tau-norm} gives us
\begin{equation*}
\norm{E_j^n(f)}_{\ell^p_{I_l}}\leqslant
\left\{\begin{array}{ll}
C'_p2^{(l-j)d/p}\left(\sum_{k\in I_j}|\langle f,\psi_{j,k}\rangle|^p\right)^{1/p}&\text{if }j\leqslant l,
\\
C'_p\left(\sum_{k\in I_j}|\langle f,\psi_{j,k}\rangle|^p\right)^{1/p}&\text{if }j>l,
\end{array}\right.
\end{equation*}
and going back to \eqref{ellp} we obtain that
\begin{multline}\label{first+second}
\norm{\left(\int_{Q^l_n}|f(x)-f(a^l_n)|^p\,\d\mu(x)\right)^{1/p}}_{\ell^p_{I_l}}
\\
\leqslant C_p\sum_{j\leqslant l}2^{jd/2}2^{j(\theta-d/p)-l\theta}\left(\sum_{k\in I_j}|\langle f,\psi_{j,k}\rangle|^p\right)^{1/p}
\\
+C_p\sum_{j>l}2^{jd/2}2^{-ld/p}\left(\sum_{k\in I_j}|\langle f,\psi_{j,k}\rangle|^p\right)^{1/p}=:I+II.
\end{multline}

The second term on the right hand side is simply bounded by
\begin{equation}\label{secondterm}
II\leqslant C_p2^{-ld/p}\norm{f}_{B^{p/d}_{p,1}}.
\end{equation}
As for the first term, we divide the sum over $j\leqslant l$ into two: $j<j_0$ and $j_0\leqslant j\leqslant l$, where $j_0<l$ is to be fixed later. Thus
\begin{align*}
I&=C_p\left(\sum_{j<j_0}+\sum_{j=j_0}^l\right)2^{-l\theta}2^{j(\theta+d/2-d/p)}\left(\sum_{k\in I_j}|\langle f,\psi_{j,k}\rangle|^p\right)^{1/p}
\\
&\leqslant C_p2^{-l\theta}\sum_{j<j_0}2^{j\theta}\norm{f}_{L^p}+C_p2^{-l\theta}\max_{j_0\leqslant j\leqslant l}\left\{2^{j(\theta-d/p)}\right\}\norm{f}_{B^{d/p}_{p,1}}
\\
&=:I_a+I_b
\end{align*}
Running the sum over $j<j_0$ in $I_a$, we get $I_a\leqslant C_p2^{(j_0-l)\theta}\norm{f}_{L^p}$. And for $I_b$ we have
\begin{align*}
I_b&\leqslant C_p2^{-l\theta}\left(2^{j_0(\theta-d/p)}+2^{l(\theta-d/p)}\right)\norm{f}_{B^{d/p}_{p,1}}
\\
&\leqslant C_p2^{-ld/p}\left(2^{(j_0-l)(\theta-d/p)}+1\right)\norm{f}_{B^{d/p}_{p,1}}.
\end{align*}
Altogether we get
\begin{equation}\label{firstterm}
I \leqslant C_p2^{(j_0-l)\theta}\norm{f}_{L^p}+C_p2^{-ld/p}\left(2^{(j_0-l)\left(\theta-d/p\right)}+1\right)\norm{f}_{B^{d/p}_{p,1}}.
\end{equation}

Adding \eqref{secondterm} and \eqref{firstterm} in \eqref{first+second} yields
\begin{multline*}
\norm{\left(\int_{Q^l_n}|f(x)-f(a^l_n)|^p\,\d\mu(x)\right)^{1/p}}_{\ell^p_{I_l}}
\\
\leqslant C_p2^{(j_0-l)\theta}\norm{f}_{L^p}+C_p2^{-ld/p}\left(2^{(j_0-l)\left(\theta-d/p\right)}+2\right)\norm{f}_{B^{d/p}_{p,1}}.
\end{multline*}
But as we saw at the begging in \eqref{Lp-ellp} this is the same to say that
\begin{multline}\label{almostfinished}
\left(\int_X\biggl|f(x)-\sum_{n\in I_l}f(a^l_n)\mathbf{1}_{Q^l_n}(x)\biggr|^p\,\mathrm{d}\mu(x)\right)^{1/p}
\\
\leqslant C_p2^{(j_0-l)\theta}\norm{f}_{L^p}
+C_p2^{-ld/p}\left(2^{(j_0-l)\left(\theta-d/p\right)}+2\right)\norm{f}_{B^{d/p}_{p,1}}.
\end{multline}

We now choose $j_0:=l-\frac{\ln(2C_p/\eps)}{\theta\ln 2}$ so that $C_p2^{(j_0-l)\theta}\leqslant \dst\frac{\eps}{2}$. Then

--- if $p\geqslant d/\theta$, $C_p2^{-ld/p}\left(2^{(j_0-l)\left(\theta-d/p\right)}+2\right)\leqslant 3C_p2^{-ld/p}$. Therefore taking $l\geq \dst\frac{p}{d\ln2}\ln\left(\frac{6C_p\norm{f}_{B^{d/p}_{p,1}}}{\eps\norm{f}_{L^p}}\right)$, \eqref{almostfinished} reduces to
\begin{equation}\label{sampling}
\left(\int_X\biggl|f(x)-\sum_{n\in I_l}f(a^l_n)\mathbf{1}_{Q^l_n}(x)\biggr|^p\,\mathrm{d}\mu(x)\right)^{1/p}\leqslant\eps\norm{f}_{L^p}
\end{equation}
from where the theorem follows.

--- On the other hand, if $p<d/\theta$
\begin{multline*}
C_p2^{-ld/p}\left(2^{(j_0-l)\left(\theta-d/p\right)}+2\right)\leqslant 2C_p2^{-ld/p}2^{(j_0-l)\left(\theta-d/p\right)}
\\
\leqslant 2C_p\max\left(2^{-l\theta},\left(\frac{\eps}{2C_p}\right)^{1-\frac{d}{\theta p}}2^{-ld/p}\right)\leqslant\frac{\kappa}{2}\eps^{1-\frac{d}{p\theta}}2^{-l\theta} 
\end{multline*}
with $\kappa:=4C_p\max\left(1,(2C_p)^{\frac{d}{p\theta}-1}\right)$. Thus, if $l\geq \dst\frac{1}{\theta\ln2}\ln\left(\frac{\kappa\norm{f}_{B^{d/p}_{p,1}}}{\eps^{\frac{d}{p\theta}}\norm{f}_{L^p}}\right)$ we again obtain \eqref{sampling}.
\end{proof}

\section*{Acknowledgments}
The first author kindly acknowledge financial support from the French ANR program, ANR-12-BS01-0001 (Aventures),
the Austrian-French AMADEUS project 35598VB - Char\-ge\-Disq, the French-Tunisian CMCU/UTIQUE project 32701UB Popart.
This study has been carried out with financial support from the French State, managed
by the French National Research Agency (ANR) in the frame of the Investments for
the Future Program IdEx Bordeaux - CPU (ANR-10-IDEX-03-02).  

The second autor is supported by the doctoral grant POS-CFRA-2015-1-125008 of Agencia Nacional de Innovación e Investigación (Uruguay) and Campus France (France).


\begin{thebibliography}{11}

\bibitem[AH]{AH}
\textsc{P. Auscher \& T. Hytönen},
\newblock{\em Orthonormal bases of regular wavelets in spaces of homogeneous type.}
Applied and Computational Harmonic Analysis {\bf 34} (2013), 266--296.\\
Addendum in same journal {\bf 39} (2015), 568--569.

\bibitem[CW]{CW}
\textsc{R. Coifman \& G. Weiss},
\newblock{\em Analyse Harmonique Non-commutative sur Certains Espaces Homogènes.}
{Lecture Notes in Mathematics} {\bf 242} (1971), Springer, Berlin.

\bibitem[DH]{DH}
\textsc{D. Deng \& Y. Han},
\newblock{\em Harmonic Analysis on Spaces of Homogeneous Type.}
{Springer Lecture Notes in Math} {\bf 1966}, (2009), Springer, New York.

\bibitem[FG]{FG}
\textsc{H. Führ \& K. Gröchenig},
\newblock{\em Sampling theorems on locally compact groups from oscillation estimates.}
{Mathematische Zeitschrift} {\bf 255} (2007), 177--194.

\bibitem[GKS]{GKS}
\textsc{A. Gogatishvili, P. Koskela \& N. Shanmugalingam},
\newblock{\em Interpolation properties of Besov spaces defined on metric spaces.}
{Mathematische Nachrichten} {\bf 283} (2010), 215--231.

\bibitem[GP]{GP}
\textsc{D. Geller \& I. Pesenson},
\newblock{\em Band-limited localized Parseval frames and Besov spaces on compact homogeneous manifolds.}
{Journal of Geometric Analysis} {\bf 21} (2011), 334--371

\bibitem[HK]{HK}
\textsc{T. Hytönen \& A. Kairema},
\newblock{\em Systems of dyadic cubes in a doubling metric space.}
{Colloquium Mathematicum} {\bf 126} (2012), 1--33.

\bibitem[HMY]{HMY}
\textsc{Y. Han, D. Müller \& D. Yang},
\newblock{\em A Theory of Besov and Triebel-Lizorkin Spaces on Metric Measure Spaces Modeled on Carnot-Carathéodory Spaces.}
{Abstract and Applied Analysis} {\bf 2008} (2008).

\bibitem[HX]{HX}
\textsc{Y. Han \& D. Xu},
\newblock{\em New characterizations of Besov and Triebel–Lizorkin
spaces over spaces of homogeneous type.}
{Journal of Mathematical Analysis and Applications} {\bf 325} (2007), 305--–318.

\bibitem[HY]{HY}
\textsc{Y. Han \& D. Yang},
\newblock{\em New characterizations and applications of inhomogeneous Besov and Triebel--Lizorkin space on spaces of homogeneous type and fractals.}
{Dissertationes Mathematicae} {\bf 403} (2002), 1--102.

\bibitem[JM]{JM}
\textsc{Ph. Jaming \& E. Malinnikova},
\newblock{\em An uncertainty principle and sampling inequalities in Besov spaces.}
{Journal of Fourier Analysis and Applications} {\bf 22} (2016), 768--786.

\bibitem[JN]{JN}
\textsc{Ph. Jaming \& F. Negreira},
\newblock{\em A sampling theorem for functions in Besov spaces on the sphere.} {2017 International Conference on Sampling Theory and Applications} (2017), Tallinn (Estonia).

\bibitem[Me]{M}
\textsc{Y. Meyer},
\newblock{\em Ondelettes et opérateurs.}
Hermann (1990), Paris. 

\bibitem[MNW]{MNW}
\textsc{H. Mhaskar, F. Narcowich \& J. Ward},
\newblock{\em Spherical Marcinkiewicz-Zygmund inequalities and positive quadrature.}
{Journal of Mathematics of Computation} {\bf 70} (2001), 1113--1130.

\bibitem[MS]{MS}
\textsc{R. Macias \& C. Segovia},
\newblock{\em Lipschitz functions on spaces of homogeneous
type.}
{Advances in Mathematics} {\bf 33} (1979), 257--270.

\bibitem[MY]{MY}
\textsc{D. Müller \& D. Yang},
\newblock{\em A difference characterization of Besov and Triebel-Lizorkin spaces on RD-spaces.}
{Forum Mathematicum} {\bf 21} (2009), 259--298.

\bibitem[NPW]{NPW}
\textsc{F. Narcowich, P. Petrushev \& J. Ward},
\newblock{\em Decomposition of Besov and Triebel–Lizorkin spaces on the sphere.}
{Journal of Functional Analysis} {\bf 238} (2006), 530--64.

\bibitem[P1]{P1}
\textsc{I. Pesenson},
\newblock{\em Sampling of Paley–Wiener functions on stratified groups.}
{Journal of Fourier Analysis and Applications} {\bf 4} (1998), 271--281.

\bibitem[P2]{P2}
\textsc{I. Pesenson},
\newblock{\em Poincaré-type inequalities and reconstruction of Paley-Wiener functions on manifolds.}
{Journal of Geometric Analysis} {\bf 4} (2004), 101--121.

\bibitem[T1]{T}
\textsc{H. Triebel},
\newblock{\em Theory of Function Spaces III.}
{Monographs in Mathematics} {\bf 100}, (2006), Birkhäuser, Basel.

\bibitem[T2]{T2}
\textsc{H. Triebel},
\newblock{\em A new approach to function spaces on quasi-metric spaces.}
{Revista Matemática Complutense} {\bf 18}, (2005), 7--48.

\bibitem[Un]{Un}
\textsc{M. Unser},
\newblock{\em Sampling-50 years after Shannon.}
{Proceedings of the IEEE} {\bf 88}, (2000), 569--587

\bibitem[Za]{Za}
\textsc{A.\,I. Zayed},
\newblock{\em Advances in Shannon's Sampling Theory.}
CRC Press, Boca Raton, FL, 1993. 
\end{thebibliography}
\end{document}